\newtheorem{theorem}{Theorem}
\newtheorem{conj}[theorem]{Conjecture}
\newtheorem{cor}[theorem]{Corollary}
\newtheorem{prop}[theorem]{Proposition}
\newtheorem{thm}{Theorem}
\newtheorem{lem}[thm]{Lemma}
\def\\{\cr}
\def\({\left(}
\def\){\right)}
\def\[{\left[}
\def\]{\right]}
\def\<{\langle}
\def\>{\rangle}
\def\notdivides{\mathrel{\kern-3pt\not\!\kern3.5pt\bigm \mid }}
\def\nDiv{\nmid}
\begin{document}
\title{On the last digit and the last non-zero digit of $n^n$ in base $b$}

\author{
{\sc Jos\'{e} Mar\'{i}a Grau}\\
{Departamento de Matem\'{a}ticas}\\
{Universidad de Oviedo}\\
{Avda. Calvo Sotelo, s/n, 33007 Oviedo, Spain}\\
{grau@uniovi.es}\\
{\sc Antonio M. Oller-Marc\'{e}n}\\
{Centro Universitario de la Defensa}\\
{Ctra. Huesca s/n, 50090 Zaragoza, Spain}\\
{oller@unizar.es}}

\date{}

\maketitle

\newpage
\begin{abstract}
In this paper we study the sequences defined by the last and the last non-zero digits of $n^n$ in base $b$. For the sequence given by the last digits of $n^n$ in base $b$, we prove its periodicity using different techniques than those used by W. Sierpinski and R. Hampel. In the case of the sequence given by the last non-zero digits of $n^n$ in base $b$ (which had been studied only for $b=10$) we show the non-periodicity of the sequence when $b$ is an odd prime power and when it is even and square-free. We also show that if $b=2^{2^s}$ the sequence is periodic and conjecture that this is the only such case.
\end{abstract}

\section{Introduction}

The study of the last digit of the elements in a sequence is a recurrent topic in Number Theory. In this sense, one of the most studied sequences is, of course, the Fibonacci sequence which was already studied by Lagrange 
 observing that the last digit of the Fibonacci sequence repeats with period 60 (see \cite{Lagrange}). In any base $b$, the sequence of Fibonacci modulo $b$ is also periodic \cite{Wall} and the periods $\pi(b)$ for each base $b$ (see \cite{Rob,Morris} for some of their properties) are called Pisano periods (Sloane's OEIS A001175). These periods have been conjectured to satisfy the relation $\pi(p^e) = p^{e-1}\pi(p)$ which is called Wall's conjecture and that has been verified for primes up to $10^{14}$. Primes for which this relation fails (if any exists) are called Wall-Sun-Sun primes. 

There are many other examples of works of similar orientation. In \cite{Walter}, for instance, the last decimal digit of $\binom{2n}{n}$ and $\sum \binom{n}{i}\binom{2n-2i}{n-i}$ is explicitly computed and D.B. Shapiro and S.D. Shapiro show in \cite{Daniel}, among other results, that the sequence $k, k^k, k^{k^{k}}, \dots, k\uparrow\uparrow n,\dots$ (mod $b$) is eventually constant.

In this paper we focus on the sequence $n^n$. The study of the residues of this sequence was started by W. Sierpinski who, in his 1950 paper \cite{sier}, proved that the last digits of the numbers $n^n$ form a periodic sequence whose shortest period consists of 20 terms. More generally, it was proved that, for every positive integer $b$, the sequence $S_b (n)$ consisting of the residues mod $b$ of the numbers $n^n$ form an infinite, eventually periodical, sequence. In 1955, R. Hampel (see \cite{Hampel}) proved that the period of $S_b(n)$ (Sloane's OEIS A174824) is $\textrm{lcm}(b,\lambda(b))$, where $\lambda$ is the Carmichael function. Moreover, he proved that if $b=\prod_{i=1}^t p_i^{s_i}$, the sequence is periodic if and only if $s_i\leq p_i $ and that periodicity starts with the maximum of the numbers $\eta_{i}:=1-p_i (1+\lceil -\frac{s_i}{p_i}\rceil)$ for $i=1,\cdot\cdot\cdot,t$. These results were established first in the prime case (by Sierpinski), then in the prime power case, and finally in general. The methods of the proof lie in the theory of linear congruences and frequent use is made of the Euler-Fermat congruence and of the properties of primitive roots. It seems remarkable to us the fact that this work by Hampel was not cited in recent work on this topic, such \cite{Cro1,Cro2,Euler,Dresden,Dresden2}.

In a somewhat different direction we find the works by R. Crocker \cite{Cro1,Cro2} and L. Somer \cite{somer} where they study the number of residues (mod $p$) of $n^n$, for $n$ between $1$ and $p$. More recently the interest on the sequence $n^n$ was revived by G. Dresden in \cite{Dresden}, where he established the non-periodicity of the last non-zero digit of the decimal expansion of this sequence and in \cite{Dresden2}, where he proves that the number formed by this digits is transcendental.

Our paper is organized as follows. In the second section we revisit, using different techniques, the work by Sierpinski and Hampel. In the third section we focus on the last non-zero digit of $n^n$ in base $b$. In particular we establish the non-periodicity of this sequence when $b$ is an odd prime power or an even square-free integer. We also show that if $b=2^{2^s}$ the sequence is periodic and conjecture that this is the only such case.

\section{The last digit of $n^n$ in base $b$}

The results that we present in this section were already proved in \cite{Hampel,sier}. We revisit then using quite different techniques.

We will start with some notation. Given $n,b\in\mathbb{N}$ we consider the following functions:
$$\mathcal{H}(n):=\textrm{lcm}(n,\lambda(n)),$$
$$S_b(n):=n^n\ \textrm{(mod b)}.$$

Observe that $S_b(n)$ gives the last digit of $n^n$ in base $b$. We are interested in studying the behavior of this sequence. A first step in this direction is given in the following proposition. This proposition not only determines the eventual periodicity of $S_b(n)$, but also the values that break the periodicity. This question was not studied by Hampel in \cite{Hampel}.

\begin{prop}
For every $b\in\mathbb{N}$ let $S_b(n)$ be the sequence defined above. Let $M\in\mathbb{N}$ and put $M=\displaystyle{\prod_{i=1}^t p_i^{k_i}}$ with $t>0$ its prime power decomposition. Then $S_b(M)\neq S_b(M+\mathcal{H}(b))$ if and only if $p_i^{k_iM+1}$ divides $b$ for some $i\in\{1,\dots,t\}$.
\end{prop}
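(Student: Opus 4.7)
Set $H:=\mathcal{H}(b)$. The first observation is that $b\mid H$, so $M+H\equiv M\pmod b$ and consequently
$$(M+H)^{M+H}\equiv M^{M+H}=M^M\cdot M^H\pmod b.$$
Therefore $S_b(M+H)=S_b(M)$ if and only if $b$ divides $M^M(M^H-1)$, and the task reduces to a purely arithmetic divisibility question that I would resolve prime by prime.

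Fix a prime $p$ dividing $b$ and let $s$ be the exponent of $p$ in $b$. If $p\nmid M$, then $\gcd(M,p^s)=1$ and, because $\lambda(p^s)\mid\lambda(b)\mid H$, the Carmichael congruence gives $M^H\equiv1\pmod{p^s}$; in particular $p^s$ divides $M^M(M^H-1)$. If instead $p=p_i$ appears in the factorisation of $M$ with exponent $k_i$, then $p_i\mid M$ forces $p_i\nmid M^H-1$, so the $p_i$-adic valuation of $M^M(M^H-1)$ equals the $p_i$-adic valuation of $M^M$, which is $k_iM$. Consequently $p_i^{s}$ divides $M^M(M^H-1)$ precisely when $s\leq k_iM$, i.e. exactly when $p_i^{k_iM+1}\nmid b$.

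Combining the two cases, the divisibility $b\mid M^M(M^H-1)$ fails iff there exists some $p_i$ in the factorisation of $M$ with $p_i^{k_iM+1}\mid b$ (such a $p_i$ automatically divides $b$, so it must belong to $\gcd(M,b)$). Negating this yields the claimed equivalence. I do not foresee a real obstacle: everything reduces to a $p$-adic valuation count combined with the standard Carmichael congruence. The one point requiring a touch of care is recognising that $v_{p_i}(M^H-1)=0$ whenever $p_i\mid M$, which is the key observation isolating $M^M$ as the only contributor to the $p_i$-adic valuation in that case.
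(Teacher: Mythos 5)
Your argument is correct and follows essentially the same route as the paper's proof: both reduce the question to whether $b$ divides $M^M(M^{\mathcal{H}(b)}-1)$, dispose of the prime powers of $b$ coprime to $M$ via the Carmichael congruence, and for primes $p_i\mid M$ observe that $p_i\nmid M^{\mathcal{H}(b)}-1$ so that only the valuation $k_iM$ of $M^M$ matters. The only cosmetic difference is that you phrase the last step as a $p$-adic valuation count while the paper phrases it as a congruence $M^M\not\equiv 0\pmod{p_i^{a_i}}$; the content is identical.
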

\begin{proof}
Put $b=p_1^{a_1}\cdots p_t^{a_t}q_1^{r_1}\cdots q_s^{r_s}$ the prime-power decomposition of $b$ ($q_i\neq p_j$). We have that $M+\mathcal{H}(b)\equiv M$ (mod $b$). Also, since
$\lambda(q_i^{r_i}) \mid \mathcal{H}(b)$, we have that $M^{\mathcal{H}(b)}\equiv 1$ (mod $b$) and it follows that $M^M\equiv (M+\mathcal{H}(b))^{M+\mathcal{H}(b)}$ (mod $q_i^{r_i}$) for every $i\in\{1,\dots,s\}$. As a consequence
$M^M\not\equiv (M+\mathcal{H}(b))^{M+\mathcal{H}(b)}$ (mod $b$) if and only if $M^M\not\equiv M^{M+\mathcal{H}(b)}$ (mod $p_i^{a_i}$) for some $i\in\{1,\dots, t\}$. Clearly, this happens if and only if
$M^M(M^{\mathcal{H}(b)}-1)\not\equiv 0$ (mod $p_i^{a_i}$). But, since $p_i$ does not divide $M^{\mathcal{H}(b)}-1$, this happens if and only if $M^M\neq 0$ (mod $p_i^{a_i}$).
Finally, $M^M=\prod_{i=1}^t p_i^{k_iM}\not\equiv 0$ (mod $p_i^{a_i}$) if and only if $a_i\geq k_iM+1$; i.e., if and only if $p_i^{k_iM+1} \mid b$.
\end{proof}

This result clearly implies that the sequence $S_b(n)$ is eventually periodic with its period being a divisor of $\mathcal{H}(b)$. The next results are devoted to show that the period is exactly $\mathcal{H}(b)$.

\begin{prop}
If $S_b(n)=S_b(n+T)$ for every $n\geq n_0$, then $b$ divides $T$.
\end{prop}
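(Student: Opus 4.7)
The plan is to prove, via the Chinese Remainder Theorem, that $p^a\mid T$ for every prime power $p^a$ exactly dividing $b$; this is equivalent to $b\mid T$. So I fix such a $p^a$ and work modulo $p^a$, using only the consequence $n^n\equiv (n+T)^{n+T}\pmod{p^a}$ of the hypothesis.

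First I would establish that $p\mid T$: otherwise, pick $n\ge n_0$ with $n\equiv -T\pmod p$. Then $\gcd(n,p)=1$ while $p\mid n+T$, so $n^n$ is a unit mod $p$ but $v_p((n+T)^{n+T})\ge n+T\ge 1$, a contradiction. Write $c:=v_p(T)\ge 1$ and suppose for contradiction that $c<a$.

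Now choose $n=1+p^a k$ with $k$ large enough that $n\ge n_0$. Since $n\equiv 1\pmod{p^a}$ we have $n^n\equiv 1\pmod{p^a}$, and reducing the base modulo $p^a$ gives $(n+T)^{n+T}\equiv(1+T)^{1+T+p^a k}\pmod{p^a}$. Lifting-the-Exponent applied to $1+T$ and $1$ shows that $v_p((1+T)^{p^a}-1)\ge a$ in every subcase ($p$ odd, or $p=2$ with $c\ge 2$, or $p=2$ with $c=1$), so the factor $(1+T)^{p^a k}$ may be dropped from the exponent. The family of hypotheses then collapses to the single congruence
$$(1+T)^{1+T}\equiv 1\pmod{p^a}.$$

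To contradict this I would expand
$$(1+T)^{1+T}-1=T+(1+T)\bigl[(1+T)^T-1\bigr],$$
and apply Lifting-the-Exponent once more: it gives $v_p((1+T)^T-1)=2c$ for $p$ odd and for $p=2$ with $c\ge 2$, while the remaining case $p=2$, $c=1$ is handled by the direct computation $(1+2m)^2\equiv 1\pmod 8$ for $m$ odd, which yields a $2$-valuation of at least $3$. In every case $v_p((1+T)^T-1)>c$, so the second summand has $p$-valuation strictly larger than $v_p(T)=c$ and the whole expression has valuation exactly $c<a$, contradicting $(1+T)^{1+T}\equiv 1\pmod{p^a}$. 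Hence $c\ge a$ and $p^a\mid T$. The main obstacle I foresee is the Lifting-the-Exponent bookkeeping for $p=2$ (its standard form requires $4\mid x-y$, so the subcase $v_2(T)=1$ needs a separate argument); the conceptual step that makes the proof go through is the choice $n=1+p^a k$, which condenses the full family of congruences into a single numerical identity that can be attacked by a binomial expansion.
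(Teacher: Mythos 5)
Your proof is correct, but it takes a genuinely different route from the paper's. Both arguments hinge on substituting $n\equiv 1$ to reduce the hypothesis to a congruence of the form $(1+T)^{n+T}\equiv 1$, but they extract $b\mid T$ from it by different mechanisms. The paper first proves $\mathrm{rad}(b)\mid T$ (via $n\equiv 0\pmod b$) and $(p_1-1)\cdots(p_s-1)\mid T$, and then makes a single global choice $n=K(p_1-1)\cdots(p_s-1)b+1$ so that the multiplicative order of $T+1$ modulo each $p_i^{b_i}$, which divides $\gcd(\varphi(p_i^{b_i}),n+T)$, is forced to equal $1$; this is an order-theoretic argument, and it needs the auxiliary divisibility by $(p_1-1)\cdots(p_s-1)$ precisely to make that gcd collapse. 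You instead work one prime at a time, with $p^a$ the exact power of $p$ in $b$, collapse the whole family of congruences to the single identity $(1+T)^{1+T}\equiv 1\pmod{p^a}$, and then compute the exact valuation $v_p\bigl((1+T)^{1+T}-1\bigr)=v_p(T)$ via the decomposition $T+(1+T)\bigl[(1+T)^T-1\bigr]$ and Lifting-the-Exponent. Your version is more local and self-contained --- it never needs $\varphi(\mathrm{rad}(b))\mid T$, which the paper in any case re-derives in the subsequent corollary --- at the price of the $2$-adic case analysis in LTE; the paper's version avoids valuations entirely but needs the preparatory divisibilities and a careful gcd computation. I checked your edge cases (the initial step showing $p\mid T$ by taking $n\equiv -T\pmod p$, and the subcase $p=2$, $v_2(T)=1$ handled by $(1+2m)^2\equiv 1\pmod 8$ for odd $m$) and they are sound.
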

\begin{proof}
We can choose $n\equiv 0$ (mod $b$) and it follows that $T^{n+T}\equiv 0$ (mod $b$). This implies that $\textrm{rad}(b) \mid T$.

Now, $n^n\equiv (n+T)^{n+T}$ (mod rad($b$)) for every $n\geq n_0$, so we have that $n^n\equiv n^{n+T}$ (mod rad($b$)). We can choose $n$ such that $\gcd(n,b)=1$ so that $n^T\equiv 1$ (mod rad($b$)). From this, it follows that $\varphi(\textrm{rad}(b)) \mid T$; i.e., if $b=p_1^{b_1}\cdots p_s^{b_s}$ then $(p_1-1)\cdots (p_s-1) \mid T$.

If we choose $n\equiv 1$ (mod $b$), then it follows that $(T+1)^{n+T}\equiv 1$ (mod $b$). Since rad($b$) divides $T$, we have that $\gcd(T+1,b)=1$ and, consequently, that $\gcd(T+1,p_i^{b_i})=1$. Thus $(T+1)^{\gcd(\varphi(p_i^{b_i}),n+T)}\equiv 1$ (mod $p_i^{b_i}$).

Assume that $p_i \mid n+T$. Then, since $p_i \mid T$ it follows that $p_i \mid n$. This is a contradiction because $p_i \mid n-1$ and we get that $\gcd(\varphi(p_i^{b_i}),n+T)=\gcd(n+T,p_i-1)$. On the other hand, it can be easily seen that, in our conditions, $\gcd(n+T,p_i-1)=\gcd(n,p_i-1)$. We have thus seen that $(T+1)^{\gcd(n,p_i-1)}\equiv 1$ (mod $b$) for every $n\equiv 1$ (mod $b$).

We can now choose $n=K(p_1-1)\cdots (p_s-1)b+1$ with $k$ such that $n\geq n_0$. It is clear that $n\equiv 1$ (mod $b$) and, moreover, $\gcd(n,p_i-1)=1$. Thus we obtain that $(T+1)\equiv 1$ (mod $b$) and the result follows.
\end{proof}

\begin{cor}
If $S_b(n)=S_b(n+T)$ for every $n\geq n_0$, then $\lambda(b)$ divides $T$.
\end{cor}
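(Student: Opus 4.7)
The plan is to deduce the corollary directly from the previous proposition (which gives $b\mid T$) together with a short argument on units modulo $b$. I would fix an arbitrary integer $n$ coprime to $b$; by replacing $n$ with $n+kb$ for a sufficiently large $k$, I may assume $n\geq n_0$ without changing either $n\bmod b$ or $\gcd(n,b)$. Since $b\mid T$, I also have $\gcd(n+T,b)=\gcd(n,b)=1$, so every power of $n$ or $n+T$ that appears will be invertible modulo $b$.

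From $b\mid T$ it follows that $n+T\equiv n\pmod b$, and therefore $(n+T)^{n+T}\equiv n^{n+T}\pmod b$. The periodicity hypothesis $S_b(n)=S_b(n+T)$ then yields
$$n^n\equiv n^{n+T}\pmod b,\qquad \text{i.e.,}\qquad n^n\bigl(n^T-1\bigr)\equiv 0\pmod b.$$
Because $n^n$ is a unit modulo $b$, it cancels, leaving $n^T\equiv 1\pmod b$.

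Since this congruence holds for every $n$ coprime to $b$, the definition of the Carmichael function $\lambda(b)$ as the exponent of $(\Z/b\Z)^\times$ forces $\lambda(b)\mid T$, which is the claim.

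The argument is essentially immediate once the previous proposition is in hand, and there is no real obstacle. The only point worth noting is that the divisibility $b\mid T$ plays a double role: it both makes $n+T$ coprime to $b$ (so that cancellation is legal) and lets me identify $n+T$ with $n$ inside the base of the exponentiation, reducing the statement to the usual characterization of $\lambda(b)$.
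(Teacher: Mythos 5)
Your proof is correct and follows essentially the same route as the paper's: use $b\mid T$ from the preceding proposition to reduce $S_b(n)=S_b(n+T)$ to $n^n(n^T-1)\equiv 0\pmod b$, cancel the unit $n^n$, and invoke the characterization of $\lambda(b)$ as the exponent of $(\Z/b\Z)^\times$. Your remark about shifting $n$ by multiples of $b$ to reach $n\geq n_0$ makes explicit a small point the paper leaves implicit, but the argument is the same.
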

\begin{proof}
In the previous proposition be have seen that $b \mid T$. Thus, $S_b(n)=S_b(n+T)$ implies that $n^n\equiv (n+T)^{n+T}\equiv n^{n+T}$ (mod $b$) and, consequently, that $n^n(n^T-1)\equiv 0$ (mod $b$) for every $n\geq n_0$. There is no problem in choosing $n$ such that $\gcd(n,b)=1$ and then $n^T\equiv 1$ (mod $b$) for every $n\geq n_0$ coprime to $b$. This clearly completes the proof.
\end{proof}

\begin{cor}
Given $b\in\mathbb{N}$, the sequence $S_b(n)$ is eventually periodic of period $\mathcal{H}(b)$.
\end{cor}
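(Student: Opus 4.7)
The plan is to combine the three preceding results: Proposition~1 gives an upper bound on the period, while Proposition~2 and its Corollary together give a matching lower bound.

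First I would use Proposition~1 to show that $\mathcal{H}(b)$ is eventually a period. For each $M = \prod_{i=1}^t p_i^{k_i}$, the proposition says $S_b(M) \neq S_b(M + \mathcal{H}(b))$ only when there is some prime $p_i$ dividing $M$ such that $p_i^{k_i M + 1} \mid b$. Writing $b = \prod p^{a_p}$, this requires $k_i M + 1 \leq a_{p_i}$, which (since $k_i \geq 1$) forces $M \leq a_{p_i} - 1$. Hence the set of exceptional $M$ is finite and bounded by $\max_{p\mid b}(a_p - 1)$; beyond this threshold $N_0$, we have $S_b(M) = S_b(M + \mathcal{H}(b))$ for all $M \geq N_0$. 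This shows the sequence is eventually periodic with period dividing $\mathcal{H}(b)$.

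Next I would invoke Proposition~2 and the previous Corollary: if $T$ is any eventual period of $S_b(n)$, then $b \mid T$ and $\lambda(b) \mid T$, so $\mathcal{H}(b) = \mathrm{lcm}(b, \lambda(b))$ divides $T$. In particular, the minimal eventual period $T_0$ is a multiple of $\mathcal{H}(b)$. Combined with the fact from the previous paragraph that $\mathcal{H}(b)$ is itself an eventual period (hence $T_0 \mid \mathcal{H}(b)$), we conclude $T_0 = \mathcal{H}(b)$.

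The only step that requires any thought is the finiteness of the exceptional set in Proposition~1, but this is immediate from the observation that the condition $p_i^{k_i M + 1} \mid b$ caps both $k_i$ and $M$ in terms of the fixed exponents appearing in $b$. Everything else is simply reading off divisibility from the two previous results and taking their lcm.
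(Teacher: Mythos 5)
Your proposal is correct and follows essentially the same route as the paper: Proposition~1 shows $\mathcal{H}(b)$ is an eventual period (you make explicit the finiteness of the exceptional set, which the paper leaves implicit), and Proposition~2 together with Corollary~1 forces $\mathcal{H}(b)=\lcm(b,\lambda(b))$ to divide any period, pinning the period down exactly.
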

\begin{proof}
Due to Proposition 1, the sequence $S_b(n)$ is eventually periodic and its period must divide $\mathcal{H}(b)$. Now, let $T$ be the period. Proposition 2 and Corollary 1 imply that $b$ and $\lambda(b)$ both divide $T$ and hence the result.
\end{proof}

We have seen that $S_b(n)$ is eventually periodic. It is also interesting to study in which cases this sequence is periodic.

\begin{prop}
Let $b=\displaystyle{\prod_{i=1}^t p_i ^{s_i}}$. The sequence $S_b(n)$ is periodic if and only if $s_i \leq p_i$ for every $i\in\{1, \dots,t\}$.
\end{prop}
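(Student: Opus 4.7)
The plan is to leverage Proposition 1, which characterizes exactly those indices $M$ where $S_b(M+\mathcal{H}(b))$ disagrees with $S_b(M)$. By Corollary 2, $\mathcal{H}(b)$ is the minimal eventual period of $S_b$, so any pure period must be a multiple of it; in particular, the sequence is purely periodic if and only if $S_b(M)=S_b(M+\mathcal{H}(b))$ for \emph{every} $M\geq 1$. (The case $M=1$ is immediate since $1+\mathcal{H}(b)\equiv 1\pmod{b}$, hence $(1+\mathcal{H}(b))^{1+\mathcal{H}(b)}\equiv 1\pmod{b}$.) Combining this with Proposition 1, periodicity is equivalent to the following arithmetic statement: for every $M=\prod_i p_i^{k_i}\geq 2$ and every $i$, one has $p_i^{k_iM+1}\nmid b$.

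For the ``if'' direction I would assume $s_i\leq p_i$ for all $i$ and verify the above arithmetic condition. Fix $M\geq 2$ and a prime $p_i$ appearing in its decomposition with multiplicity $k_i\geq 1$; if $p_i\nmid b$ the condition is vacuous, so assume $p_i\mid b$. Since $p_i\mid M$ we have $M\geq p_i$, whence $k_iM\geq p_i\geq s_i$, and therefore $p_i^{k_iM+1}\nmid b$, as required.

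For the converse I would argue by contrapositive. Suppose $s_i>p_i$ for some $i$ and take $M=p_i$. Then $k_i=1$ and $k_iM+1=p_i+1\leq s_i$, so $p_i^{k_iM+1}\mid b$. Proposition 1 then yields $S_b(p_i)\neq S_b(p_i+\mathcal{H}(b))$. But if the sequence were purely periodic with some period $T$, then $T$ would also be an eventual period and hence a multiple of $\mathcal{H}(b)$ by Corollary 2, which would force $S_b(p_i)=S_b(p_i+\mathcal{H}(b))$: a contradiction.

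The whole argument is essentially bookkeeping once Proposition 1 and Corollary 2 are in hand; no individual step is genuinely delicate. The only mildly subtle point is the reduction from ``some $T$ is a pure period'' to ``$\mathcal{H}(b)$ itself is a pure period'', which rests on the fact that every pure period of an eventually periodic sequence is a multiple of the minimal eventual period. This reduction is what lets us test periodicity against the single value $T=\mathcal{H}(b)$ and thereby bring Proposition 1 directly to bear.
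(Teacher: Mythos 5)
Your proof is correct and follows essentially the same route as the paper: both directions reduce to Proposition 1, with the key inequality $k_iM\geq M\geq p_i\geq s_i$ for the ``if'' part and the witness $M=p_i$ for the converse. If anything, you are slightly more careful than the paper in justifying why it suffices to test the single shift $T=\mathcal{H}(b)$, and you avoid the paper's unnecessary case-split of $n$ into a part built from primes of $b$ and a part coprime to $b$.
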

\begin{proof}
Assume that $S_b(n)$ is periodic with period $\mathcal{H}(b)$. Then $S_b(p_i)=S_b(p_i+\mathcal{H}(b))$ for every $i$ so the Proposition 1 implies that $p_i^{p_i+1}$ does not divide $p_i^{s_i}$; i.e., $p_i\geq s_i$ for every $i$ as claimed.

Conversely, assume that $s_i\leq p_i$ for every $i$.

Let $n=p_1^{k_1}\cdots p_t^{k_t}$ be an integer such that the primes in its decomposition are the same than those in the decomposition of $b$. If $i\in\{1,\dots,t\}$ is such that $k_i\neq 0$ we have that $s_i\leq p_i\leq k_ip_i^{k_i}\leq k_iM$ so by Proposition 1 again $S_b(n)=S_b(n+\mathcal{H}(b))$.

On the other hand, if $\gcd(n,b)=1$ we have that $S_b(n)=S_b(n+\mathcal{H}(b))$ since $\lambda(b)$ divides $\mathcal{H}(b)$.

To finish the proof it is enough to observe that every $n\in\mathbb{N}$ can be written in the form $n=n_1n_2$ with $\gcd(n_2,b)=1$ and to reason like in the previous cases.
\end{proof}

\section{The last non-zero digit of $n^n$ in base $b$}

In the previous section we have proved that the sequence $S_b(n)=n^n$ (mod $b$) given by the last digit of $n^n$ is eventually periodic. For instance, if $b=3$ the first elements of $S_3(n)$ are:
$$1, 1, 0, 1, 2, 0, 1, 1, 0, 1, 2, 0, 1, 1, 0, 1, 2, 0, 1,1, 0, 1, 2, 0, 1, 1, 0,1,2,0,1,\dots$$
and the period is $(1,1,0,1,2,0)$.
We can see that there are many zeros in the previous sequence, in fact if $3\mid n$ then clearly $S_3(n)=0$. We wonder what will happen if we consider the sequence given by the last non-zero digit of $n^n$ instead. In this case the 0's will disappear and they will be replaced by 1 or 2 and periodicity could  be possibly broken. For the case $b=10$ it is well-known (see \cite{Dresden}) that the sequence given by the last non-zero digit of $n^n$ in base 10 is not eventually periodic. In this section we will focus on the behavior of this sequence for some choices of $b$. In particular we will study the case when $b$ is a square-free even integer and when it is a prime power.

Before we proceed, we will introduce some notation. In what follows $L_b(n)$ will denote the last non-zero digit of $n$ in base $b$. For every $b\in\mathbb{N}$ we will consider the sequence $\mathfrak{S}_b(n):=L_b(n^n)$; i.e., $\mathfrak{S}_b(n)$ is the last non-zero digit of $n^n$. Observe that if $b\nDiv n$, then $L_b(n)\equiv n$ (mod $b$).

\subsection{The even square-free case}

We will show in this subsection that $\mathfrak{S}_b(n)$ is not eventually periodic when $b$ is a square-free even integer. Our proof will be simpler than the one given in \cite{Dresden} for the case $b=10$.

We will start with a series of technical lemmas.

\begin{lem}\label{L1}
Let $a(n)$ be a sequence such that $a(n)\in\{e_1\dots,e_r\}$ for every $n\in\mathbb{N}$. If $a(n)$ is eventually periodic, then the set $\Theta(e_i):=\{n:a(n)=e_i\}$ is (possibly with the exception of a finite number of elements) the union of a finite number of arithmetic sequences.
\end{lem}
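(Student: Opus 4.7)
The plan is to extract from eventual periodicity a finite description of the level sets $\Theta(e_i)$ directly in terms of residue classes modulo the period.

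First, I will unpack the hypothesis: by eventual periodicity, there exist $n_0\in\mathbb{N}$ and a period $T\in\mathbb{N}$ such that $a(n+T)=a(n)$ for every $n\geq n_0$. This means that, for $n\geq n_0$, the value $a(n)$ depends only on the residue $n\bmod T$. Consequently, there is a well-defined function $\varphi:\{0,1,\dots,T-1\}\to\{e_1,\dots,e_r\}$ with $a(n)=\varphi(n\bmod T)$ for all $n\geq n_0$.

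Next, for each fixed $e_i\in\{e_1,\dots,e_r\}$, I will look at the preimage
\[
J_i:=\{j\in\{0,1,\dots,T-1\}:\varphi(j)=e_i\}.
\]
Splitting $\Theta(e_i)$ into its part below $n_0$ and its part at or above $n_0$, the first part is a finite set (with at most $n_0-1$ elements), while the second part decomposes as
\[
\Theta(e_i)\cap[n_0,\infty)=\bigcup_{j\in J_i}\{n\geq n_0:n\equiv j\pmod{T}\}.
\]
Each set in this union is an arithmetic progression of common difference $T$ (eventually, after possibly discarding one initial term that lies below $n_0$), and $J_i$ has at most $T$ elements. Hence $\Theta(e_i)$ equals a finite union of arithmetic sequences together with a finite exceptional set, which is exactly the statement.

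There is no real obstacle here: the lemma is essentially a restatement of the definition of eventual periodicity in the language of level sets. The only care needed is bookkeeping the finitely many indices $n<n_0$ so they can be absorbed into the ``finite number of exceptions'' mentioned in the statement. I will also note that some of the $J_i$ may be empty, in which case $\Theta(e_i)$ is itself finite, which still fits the conclusion as the empty union.
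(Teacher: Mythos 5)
Your proof is correct and follows essentially the same route as the paper: both arguments decompose $\Theta(e_i)$ (beyond the pre-periodic part) into the finitely many residue classes modulo the period $T$ on which $a$ takes the value $e_i$, the only cosmetic difference being that you index these classes by residues $j\bmod T$ while the paper indexes them by the elements of $\Theta(e_i)$ in a window of length $T$. Nothing further is needed.
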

\begin{proof}
Assume that $a(n)$ is periodic with period $T$ and put $n_{0,i}=\min\Theta(e_i)$. Clearly $n_{0,i}+kT\in\Theta(e_i)$ for every $k$. Let $\{n_{1,i},\dots,n_{m_i,i}\}=\Theta(e_i)\cap(n_{0,i},n_{0,i}+T)$. We claim that
$$\Theta(e_i)=\bigcup_{j=0}^{m_i}\{n_{j,i}+kT:k\in\mathbb{N}\}.$$

For let $n\in\Theta(e_i)$. Then there must exist $k\in\mathbb{N}$ such that $n_{0,i}+kT\leq n<n_{0,i}+(k-1)T$. But in this case $n_{0,i}\leq n-kT<n_{0,i}+T$ so $n-kT=n_{j,i}$ for some $j\in\{0,\dots,m_i\}$ as claimed.

If $a(n)$ is not periodic, but eventually periodic, we can reason in the same way but a finite number of initial terms must be considered separately and the result follows.
\end{proof}

\begin{lem}
Let $b$ be an even square-free integer and put $b=2m$. Then $\Theta(m):=\{n:\mathfrak{S}_b(n)=m\}=\{n:L_{b}(n)=m\}$.
\end{lem}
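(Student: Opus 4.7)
The plan is to reduce both conditions $L_b(n) = m$ and $\mathfrak{S}_b(n) = m$ to the same arithmetic condition on $n$, namely that the ``$b$-free part'' of $n$ is an odd multiple of $m$. Write $b = 2p_1 \cdots p_s$ with the $p_i$ distinct odd primes, let $v$ be the largest integer with $b^v \mid n$, and put $n = b^v q$ so that $b \nmid q$. By the definition of the last non-zero digit, $L_b(n) \equiv q \pmod{b}$. Moreover, $q \equiv m \pmod{2m}$ is equivalent to $q = m(1+2k)$, so $L_b(n) = m$ iff $q$ is an odd multiple of $m$.

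Next I would compute $L_b(n^n)$. Because $b$ is square-free, the largest power of $b$ dividing any positive integer $x$ equals $\min_{p\mid b} v_p(x)$, where $v_p$ denotes the $p$-adic valuation; applying this with $x = n^n$ yields that the largest power of $b$ dividing $n^n$ is exactly $nv$. Moreover, since some prime $p \mid b$ satisfies $p \nmid q$, the same prime satisfies $p \nmid q^n$, so $b \nmid q^n$. Therefore $n^n = b^{nv} q^n$ with $b \nmid q^n$, giving $L_b(n^n) \equiv q^n \pmod{b}$; thus $\mathfrak{S}_b(n) = m$ is the statement $q^n \equiv m \pmod{2m}$.

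The final step uses the Chinese Remainder Theorem: since $\gcd(2,m)=1$, $q^n \equiv m \pmod{2m}$ iff $q^n$ is odd and $m \mid q^n$. The first is equivalent to $q$ being odd; the second, using the square-freeness of $m = p_1 \cdots p_s$, is equivalent to $p_i \mid q$ for every $i$, i.e., $m \mid q$. Together they say that $q$ is an odd multiple of $m$ --- precisely the condition obtained above for $L_b(n) = m$, finishing the proof. The only delicate point is the identity that the largest power of $b$ dividing $n^n$ is exactly $n$ times the largest power of $b$ dividing $n$; this uses square-freeness of $b$ essentially (without it one has only an inequality in general, and ``carrying'' in base $b$ can make the valuation strictly larger than expected), but once it is in place the rest of the argument is a direct application of CRT.
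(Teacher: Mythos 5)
Your proof is correct and follows essentially the same route as the paper's: decompose $n=b^vq$ with $b\nmid q$, reduce $\mathfrak{S}_b(n)=m$ to the congruence $q^n\equiv m\pmod{b}$, and split via CRT into the conditions modulo $2$ and modulo $m$, using square-freeness to replace $q^n$ by $q$. The only difference is presentational: you make explicit the valuation identity $v_b(n^n)=nv_b(n)$ and reduce both sides to the common condition ``$q$ is an odd multiple of $m$,'' where the paper argues one direction and notes the steps are reversible.
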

\begin{proof}
Let $n=b^rn'$ with $r\geq 0$ and $b$ not dividing $n'$.

$\mathfrak{S}_b(n)=m$ if and only if $(n')^n\equiv m$ (mod $b$). This implies that $(n')^m\equiv 0$ (mod $m$) and $(n')^n\equiv 1$ (mod $2$) simultaneously. But, $b$ being square-free, it follows that $n'\equiv 0$ (mod $m$) and $n'\equiv 1$ (mod $2$); i.e., $m\equiv (n')^n\equiv n'$ (mod $b$). Thus $L_b(n)=L_b(n')\equiv n'\equiv m$ (mod $b$).

Since the steps above are reversible the proof is complete.
\end{proof}

Let us now define the following family of sets:
$$\mathcal{C}_i:=\{mb^{i-1}+kb^i:k\in\mathbb{N}\}.$$
Observe that $\mathcal{C}_i\subset \Theta(m)$ and the previous lemma implies that
$$\Theta(m)=\bigcup_{i\geq 1}\mathcal{C}_i.$$
We are now in the conditions to prove the following result.

\begin{prop}
For every even and square-free integer $b$, the sequence $\mathfrak{S}_b(n)$ is not eventually periodic.
\end{prop}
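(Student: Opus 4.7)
The plan is a contradiction argument. Assume $\mathfrak{S}_b(n)$ is eventually periodic with some period $T \geq 1$, so that there exists $n_0$ with $n \in \Theta(m) \Longleftrightarrow n+T \in \Theta(m)$ for every $n \geq n_0$. The obstruction will come from the decomposition $\Theta(m) = \bigcup_{i\geq 1} \mathcal{C}_i$, whose constituent arithmetic progressions have common differences $b^i$ growing without bound. The idea is to choose $i$ so large that $b^i$ dwarfs $T$; then all base-$b$ digit manipulations happen with no carries, and the behaviour of $T$ in base $b$ can be isolated from that of large powers of $b$, producing two integers differing by $T$ that disagree on membership in $\Theta(m)$.

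The first step is to pin down the lowest non-zero base-$b$ digit of $T$ itself. Pick $i$ large enough that $b^{i-1} > T$ and $mb^{i-1} \geq n_0$, and set $n_1 := mb^{i-1} \in \mathcal{C}_i \subseteq \Theta(m)$, so that by the assumed periodicity $n_1+T \in \Theta(m)$. Writing $T$ in base $b$, the inequality $T < b^{i-1}$ guarantees that adding $T$ to $mb^{i-1}$ introduces no carries below position $i-1$, so the digits of $n_1+T$ in positions $0,\ldots,i-2$ are exactly those of $T$, while position $i-1$ still holds $m$. In particular $L_b(n_1+T) = L_b(T)$, and the membership $n_1+T \in \Theta(m)$ forces $L_b(T) = m$.

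The second step produces the contradiction by applying the same analysis to $n_2 := b^i$ for $i$ similarly large. In base $b$ we have $n_2 = 1\cdot b^i$, so $L_b(n_2) = 1$; since $b$ is even and square-free with $b \neq 2$, we have $m = b/2 \geq 3$, whence $L_b(n_2) \neq m$ and $n_2 \notin \Theta(m)$. On the other hand, $n_2 + T = b^i + T$ admits the identical no-carry analysis: its low-order digits are those of $T$, so its lowest non-zero base-$b$ digit is $L_b(T) = m$ by the previous step. Hence $n_2 + T \in \Theta(m)$, contradicting the equivalence $n_2 \in \Theta(m) \Longleftrightarrow n_2 + T \in \Theta(m)$ given by periodicity.

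The main obstacle, such as it is, is the mild exclusion of $b=2$: in that degenerate case $m = 1 = L_b(n_2)$, the distinction in the final step collapses, and indeed $\mathfrak{S}_2 \equiv 1$ is trivially periodic. Beyond handling that boundary case, the proof is pure base-$b$ bookkeeping, the essential structural point being that the unbounded common differences $b^i$ of the sets $\mathcal{C}_i$ can always be ``outrun'' by choosing $i$ large.
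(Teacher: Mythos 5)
Your proof is correct, and it takes a genuinely different route from the paper's. Both arguments rest on the same key input, namely the lemma identifying $\Theta(m)$ with $\{n: L_b(n)=m\}$ (equivalently the decomposition $\Theta(m)=\bigcup_{i\geq 1}\mathcal{C}_i$), but from there the paper proceeds structurally: it invokes Lemma \ref{L1} to deduce that eventual periodicity would force $\Theta(m)$ to be, up to finitely many terms, a finite union of arithmetic progressions, and then argues combinatorially that each such progression must eventually be contained in a single $\mathcal{C}_{j}$, which is incompatible with $\Theta(m)$ meeting every $\mathcal{C}_i$. You instead work directly with a period $T$ and two carry-free additions: $mb^{i-1}+T$ with $b^{i-1}>T$ forces $L_b(T)=m$, and then $b^i+T$ gives a pair $n$, $n+T$ disagreeing on membership in $\Theta(m)$. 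Your version is shorter, bypasses Lemma \ref{L1} entirely (including its somewhat tersely justified claim that consecutive terms of an $A_i$ landing in distinct $\mathcal{C}_{j_1},\mathcal{C}_{j_2}$ must have $j_2>j_1$), and has the genuine merit of making explicit that $b=2$ must be excluded: there $m=1$, $\mathfrak{S}_2\equiv 1$ is constant, and the proposition as literally stated fails, while the paper's own argument silently breaks at the same spot since for $b=2$ every positive integer lies in some $\mathcal{C}_i$. For $b>2$ the two arguments establish the same result.
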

\begin{proof}
Assume that $\mathfrak{S}_b(n)$ is eventually periodic. Then, due to Lemma \ref{L1} it follows that (with the exception of a finite number of elements) the set $\Theta(m)$ is a finite union of arithmetic sequences; i.e., $\displaystyle{\Theta(m)=\bigcup_{i=1}^r A_i}$. To prove the result we can put aside, without loss of generality, the finite number of elements which do not lie in this finite union of arithmetic sequences.

Let $a_{0,i}=\min A_i$ so that $A_i=\{a_{0,i}+kd_i:k\in\mathbb{N}\}$ for every $i$. If we denote by $a_{k,i}=a_{0,i}+kd_i$, the following facts should be clear from the very definition of the sets $\mathcal{C}_i$:
\begin{enumerate}
\item If $a_{k,i},a_{k+1,i}\in\mathcal{C}_j$ for some $k\in\mathbb{N}$, then $b^j \mid d_i$ and $a_{h,i}\in\mathcal{C}_j$ for every $h\geq k$.
\item If $a_{k,i}\in\mathcal{C}_{j_1}$ and $a_{k+1,i}\in\mathcal{C}_{j_2}$, then $j_2>j_1$ because, otherwise, $a_{k+1,i}\not\in\Theta(m)$.
\item If $a_{k,i}\in\mathcal{C}_{j_1}$ and $a_{k+1,i}\in\mathcal{C}_{j_2}$ with $j_2>j_1$, then $a_{k+2,i}\in C_{j_1}$. Consequently, we can apply the previous point to find a contradiction.
\end{enumerate}
The three points above show that if $\displaystyle{\Theta(m)=\bigcup_{i=1}^r A_i}$, then each $A_i$ is eventually contained in some fixed $\mathcal{C}_{j(i)}$. This clearly contradicts the fact that $\displaystyle{\Theta(m)=\bigcup_{i\geq 1}\mathcal{C}_{i}}$ and the proof is finished.
\end{proof}

\subsection{The prime power case}
In this section we focus on the behavior of the sequence $\mathfrak{S}_{p^t}(n)$ with $p$ a prime and $t\geq 1$. Since this situation is rather different from the situation of the previous section we will have to use different techniques here. In fact we have to study the case $t=1$ separately.

\subsubsection{The case $t=1$}
To study the behavior of the sequence $\mathfrak{S}_{p}(n)$ for every prime $p$ we will make use of some kind of ``fractality'' of this sequence, which is  established in the following lemma.

\begin{lem}\label{fractal}
If $p$ is a prime, then $\mathfrak{S}_p(n)=\mathfrak{S}_p(pn)$.
\end{lem}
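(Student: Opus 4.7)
The plan is to reduce both $\mathfrak{S}_p(n)$ and $\mathfrak{S}_p(pn)$ to a simple power of the $p$-free part of $n$ modulo $p$, and then finish with Fermat's little theorem.

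First I would factor out the $p$-part of $n$: write $n = p^a m$ with $a \geq 0$ and $\gcd(m,p) = 1$. Then
\[
n^n = p^{an}\, m^n,
\]
and since $\gcd(m^n, p) = 1$, the factor $m^n$ contributes no trailing zeros in base $p$. Consequently the last non-zero digit of $n^n$ in base $p$ is just $m^n \bmod p$, so $\mathfrak{S}_p(n) \equiv m^n \pmod p$. (This is exactly the instance of the remark at the start of the section saying $L_b(n) \equiv n \pmod b$ when $b \nmid n$, applied to the $p$-free cofactor $m^n$.)

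Next I would apply the same factorisation to $pn = p^{a+1}m$, obtaining
\[
(pn)^{pn} = p^{(a+1)pn}\, m^{pn},
\]
so $\mathfrak{S}_p(pn) \equiv m^{pn} \pmod p$ by the same reasoning. The identity to prove therefore reduces to
\[
m^n \equiv m^{pn} \pmod p.
\]
Since $\gcd(m,p)=1$, Fermat's little theorem gives $m^{p} \equiv m \pmod p$, and raising both sides to the $n$-th power yields $m^{pn} = (m^p)^n \equiv m^n \pmod p$, which is what we wanted.

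There is really no significant obstacle here; the only thing to be careful about is the bookkeeping of the exponent of $p$ when passing from $n$ to $pn$, and the observation that stripping the $p^{an}$ (respectively $p^{(a+1)pn}$) prefix in base $p$ is precisely the operation of extracting the last non-zero digit. Once those are noted, Fermat's little theorem closes the argument in one line.
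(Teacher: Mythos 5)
Your proof is correct and follows essentially the same route as the paper: strip off the $p$-part of $n$, observe that extracting the last non-zero base-$p$ digit amounts to reducing the $p$-free cofactor mod $p$, and finish with Fermat's little theorem. The only (cosmetic) difference is that you handle $p \nmid n$ and $p \mid n$ uniformly by allowing $a \geq 0$, whereas the paper treats the two cases separately and reduces both sides to $(n')^{n'} \bmod p$.
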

\begin{proof}
If $p\nDiv n$, since $p\nDiv n^n$, we have that $\mathfrak{S}_p(n)=L_p(n^n)\equiv n^n$ (mod $p$). On the other hand, $\mathfrak{S}_p(pn)=L_p(p^{pn}n^{pn})=L_p(n^{pn})\equiv n^{pn}\equiv n^n$ (mod $p$).

Now, if $n=p^mn'$ with $p\nDiv n'$, then:
\begin{align*}
\mathfrak{S}_{p}(pn)&=L_p(p^{pn}n^{pn})=L_p(n^{pn})=L_p(p^{mpn}(n')^{pn})=\\&=L_p((n')^{pn})\equiv (n')^{pn}=(n')^{p^{m+1}n'}\equiv (n')^{n'}\ \textrm{(mod $p$)},
\end{align*}
while:
$$\mathfrak{S}_p(n)=L_p(n^n)=L_p(p^{mn}(n')^{n})=L_p((n')^{p^mn'})\equiv (n')^{p^mn'}\equiv (n')^{n'}\ \textrm{(mod $p$)}$$
and hence the result.
\end{proof}

The previous lemma gives us, in addition, some information about the period of $\mathfrak{S}_p(n)$, if it exists.

\begin{lem}
If the sequence $\mathfrak{S}_p(n)$ is eventually periodic of period $T$, then $p\nDiv T$
\end{lem}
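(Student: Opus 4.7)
The plan is a contradiction argument leveraging the fractality relation $\mathfrak{S}_p(n)=\mathfrak{S}_p(pn)$ from Lemma \ref{fractal}. Assume $\mathfrak{S}_p(n)$ is eventually periodic from some index $n_0$ with (minimal) period $T$, and suppose for contradiction that $p\mid T$, so that we can write $T=pT'$ with $T'$ a positive integer. I will show that $T'$ is then itself an eventual period, which contradicts the minimality of $T$.

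\textbf{Main step.} For any $n\geq n_0$, apply fractality twice:
\[
\mathfrak{S}_p(n)=\mathfrak{S}_p(pn)=\mathfrak{S}_p(pn+T)=\mathfrak{S}_p\bigl(p(n+T')\bigr)=\mathfrak{S}_p(n+T').
\]
The first and last equalities use Lemma \ref{fractal}; the middle equality uses the periodicity assumption, which is legitimate provided $pn\geq n_0$, and this certainly holds for all sufficiently large $n$. Thus $\mathfrak{S}_p(n+T')=\mathfrak{S}_p(n)$ for all large $n$, i.e.\ $T'<T$ is also an eventual period, contradicting the minimality of $T$.

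\textbf{Potential obstacle.} The only delicate point is the interpretation of ``period'': the statement is meaningful precisely when $T$ denotes the minimal eventual period, since otherwise, given a period $T$ with $p\mid T$, we could only conclude (by iterating the argument above finitely many times) that some divisor $T/p^k$ coprime to $p$ is still a period, which would not directly contradict $p\mid T$. Reading $T$ as the smallest eventual period, however, makes the argument clean: a single application of the identity $\mathfrak{S}_p(p(n+T'))=\mathfrak{S}_p(n+T')$ combined with $\mathfrak{S}_p(pn)=\mathfrak{S}_p(n)$ reduces $T$ by a factor of $p$, immediately violating minimality. No further calculations are required.
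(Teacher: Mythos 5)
Your proof is correct and follows essentially the same route as the paper: write $T=pT'$ and use the fractality identity $\mathfrak{S}_p(n)=\mathfrak{S}_p(pn)$ on both sides to deduce that $T'<T$ is also an eventual period, contradicting minimality. Your remark that $T$ must be read as the \emph{minimal} period is a fair clarification of a point the paper leaves implicit.
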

\begin{proof}
If $p \mid T$ we have:
$$\mathfrak{S}_p(n)=\mathfrak{S}_p(pn)=\mathfrak{S}_p(pn+T)=\mathfrak{S}_p(pn+pT')=\mathfrak{S}_{p}(n+T')$$
with $T'<T$, a contradiction.
\end{proof}

The next proposition proves the non-periodicity of $\mathfrak{S}_p(n)$ if $p$ is odd.

\begin{prop}
If $p$ is an odd prime, the sequence $\mathfrak{S}_p(n)$ is not eventually periodic.
\end{prop}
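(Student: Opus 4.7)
The plan is to argue by contradiction. Assume $\mathfrak{S}_p(n)$ is eventually periodic of period $T$; by the preceding lemma $p\nmid T$, so $T$ is invertible modulo $p$. The idea is to combine the fractality of Lemma \ref{fractal} with the assumed periodicity: fractality lets me replace an arbitrary $n$ by $p^k n$ without changing $\mathfrak{S}_p$, which pushes the argument into the periodic range, after which comparing $\mathfrak{S}_p(p^k n)$ with $\mathfrak{S}_p(p^k n+T)$ forces a rigid identity modulo $p$ that no such $T$ can satisfy.

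First I would fix $n\geq 1$ coprime to $p$, iterate Lemma \ref{fractal} to obtain $\mathfrak{S}_p(n)=\mathfrak{S}_p(p^k n)$, and then for $k$ large enough invoke periodicity to conclude $\mathfrak{S}_p(p^k n)=\mathfrak{S}_p(p^k n+T)$. Since $p\nmid T$, the integer $p^k n+T$ is coprime to $p$, and the right-hand side equals $(p^k n+T)^{p^k n+T}\bmod p$. Reducing the base modulo $p$ leaves $T^{p^k n+T}$; combined with the observation $p\equiv 1\pmod{p-1}$, which gives $p^k n\equiv n\pmod{p-1}$, Fermat's little theorem collapses this further to $T^{n+T}\pmod p$. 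Comparing with the left-hand side $n^n\pmod p$ yields the master identity
\[
n^n \equiv T^{n+T} \pmod{p} \qquad \text{for every positive integer } n \text{ coprime to } p. \qquad (\star)
\]

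Next I would specialize $(\star)$ twice: taking $n=T$ gives $T^T\equiv 1\pmod p$; taking $n=1$ gives $T^{T+1}\equiv 1\pmod p$; dividing yields $T\equiv 1\pmod p$. Plugging this back into $(\star)$ forces $n^n\equiv 1\pmod p$ for \emph{every} $n$ coprime to $p$. To contradict this it suffices to exhibit a single violating $n$: using $\gcd(p,p-1)=1$ and the Chinese Remainder Theorem, I would choose $n$ satisfying $n\equiv g\pmod p$ for a primitive root $g$ and $n\equiv 1\pmod{p-1}$. For such $n$ one computes $n^n\equiv g\pmod p$, and $g\not\equiv 1\pmod p$ since $p\geq 3$ forces $g$ to have order $p-1\geq 2$.

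The main obstacle is establishing $(\star)$ cleanly, specifically the double reduction of $(p^k n+T)^{p^k n+T}$ modulo $p$: one must handle base and exponent separately, and the exponent reduction hinges on the small but crucial fact $p^k\equiv 1\pmod{p-1}$, which is what makes the resulting identity genuinely independent of $k$ and therefore a usable constraint on $T$. Everything after $(\star)$ is routine manipulation with Fermat's little theorem and CRT.
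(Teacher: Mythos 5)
Your proposal is correct and follows essentially the same route as the paper: both combine the fractality lemma with $p \nmid T$ to derive the identity $n^n \equiv T^{n+T} \pmod{p}$ for all $n$ coprime to $p$, then specialize to force $T \equiv 1 \pmod{p}$ and contradict the resulting claim that $n^n \equiv 1 \pmod{p}$ always. The only cosmetic differences are that the paper obtains $T^T \equiv 1$ by taking $n = p-1$ rather than $n = T$, and it leaves the final counterexample implicit where you construct one explicitly by CRT.
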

\begin{proof}
Assume, on the contrary, that the sequence is eventually periodic; i.e., that $\mathfrak{S}_p(n)=\mathfrak{S}_p(n+T)$ eventually, with minimal $T$.

Take $n=p^mn'$ with $p\nDiv n'$. Like in Lemma 3 above, $\mathfrak{S}_{p}(n)\equiv (n')^{n'}$ (mod $p$). Now, since $p \mid n$ but $p\nDiv T$, it follows that $p\nDiv (n+T)^{n+T}$ and thus:
$$\mathfrak{S}_p(n+T)\equiv (n+T)^{n+T}\equiv T^{n+T}\equiv T^{n'+T}\ \textrm{(mod $p$)}.$$
We have thus seen that for every $n'$ such that $p\nDiv n'$, $T^{n'+T}\equiv (n')^{n'}$ (mod $p$).

If we take $n'=1$ it follows that $T^{T+1}\equiv 1$ (mod $p$). If we take $n'=p-1$ (recall that $p\neq 2$) it follows that $T^T\equiv 1$ (mod $p$). This facts together imply that $T\equiv 1$ (mod $p$) but this would imply that $(n')^{n'}\equiv 1$ (mod $p$) for every  $n'$ with $p\nDiv n'$. This is a contradiction and the proof is complete.
\end{proof}

To complete the study in this case it is enough to observe that $\mathfrak{S}_2(n)$ is obviously constant with $\mathfrak{S}_2(n)=1$ for every $n\in\mathbb{N}$ because, in base 2, the last non-zero digit of any number is 1.

\subsubsection{The case $t>1$}

Now, we turn to the sequence $\mathfrak{S}_{p^t}(n)$ with $p$ an odd prime and $t>1$. In this case we have the following analogue of Lemma \ref{fractal} to describe the ``fractality'' of $\mathfrak{S}_{p^t}(n)$.

\begin{lem}
Let $p$ be a prime and let $t>1$ be any integer. Then $\mathfrak{S}_{p^t}(p^tn)=\mathfrak{S}_{p^t}(p^{t+\varphi(t)}n)$ for every $n\in\mathbb{N}$.
\end{lem}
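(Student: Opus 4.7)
The plan is to reduce to a direct computation by writing $n = p^s n'$ with $\gcd(n', p) = 1$ and then applying the key identity $L_{p^t}(p^v u) \equiv p^{v \bmod t}\,u \pmod{p^t}$ for $p \nmid u$. This identity decouples $\mathfrak{S}_{p^t}$ into a valuation part (the exponent of $p$, taken modulo $t$) and a unit part (taken modulo $p^t$), which can be analyzed separately. With this setup one writes
\[
(p^t n)^{p^t n} = p^{(t+s)p^{t+s}n'}(n')^{p^{t+s}n'},
\]
and the analogous expression with $t+\varphi(t)$ replacing $t$ in the relevant places, reducing the lemma to two independent claims: (i) the two $p$-adic valuations are congruent modulo $t$, and (ii) the two unit parts $(n')^{p^{t+s}n'}$ and $(n')^{p^{t+\varphi(t)+s}n'}$ are congruent modulo $p^t$.

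For the unit claim (ii), I would argue as follows. Since $\gcd(n',p)=1$, the multiplicative order of $n'$ in $(\Z/p^t\Z)^\times$ divides $\varphi(p^t)=p^{t-1}(p-1)$. The difference of the two exponents factors as $p^{t+s}\,n'\,(p^{\varphi(t)}-1)$, and divisibility by $\varphi(p^t)$ follows immediately: $p^{t-1}\mid p^{t+s}$ because $s\geq 0$, and $(p-1)\mid p^{\varphi(t)}-1$. So (ii) is essentially a formality.

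The main obstacle is the valuation claim (i). After cancelling the terms automatically divisible by $t$, what remains to verify is
\[
s\,p^{t+s}n' \equiv (\varphi(t)+s)\,p^{t+\varphi(t)+s}n' \pmod t.
\]
When $\gcd(p,t)=1$ the plan is to invoke Euler's theorem $p^{\varphi(t)}\equiv 1\pmod t$ to collapse $p^{t+\varphi(t)+s}\equiv p^{t+s}\pmod t$ and then show that the remaining $\varphi(t)\,p^{t+s}n'$ on the right vanishes modulo $t$ after the appropriate bookkeeping. When $p\mid t$, I would factor $t=p^a t'$ with $\gcd(t',p)=1$ and verify the congruence modulo $p^a$ and modulo $t'$ separately: modulo $p^a$ both sides carry enough powers of $p$ to vanish (since $a\le t\le t+s$), while modulo $t'$ Euler still gives $p^{\varphi(t)}\equiv 1\pmod{t'}$ because $\varphi(t')\mid\varphi(t)$. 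The delicate part, and what I expect to be the main technical heart of the argument, is reconciling the occurrence of $\varphi(t)$ in the exponent of $p$ with the modulus $t$ when doing the mod-$t'$ computation; this is where the hypothesis $t>1$ should really enter, ensuring that the interplay between the valuation shift and $\varphi$ lands correctly.
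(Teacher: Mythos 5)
Your setup, the key identity $L_{p^t}(p^v u)\equiv p^{\,v\bmod t}\,u\pmod{p^t}$ for $p\nmid u$, and your unit-part claim (ii) are all correct and agree with the paper's computation. The problem is entirely in the valuation claim (i), which you have left as a ``plan.'' You reduced it, correctly, to
$$s\,p^{t+s}n'\equiv(\varphi(t)+s)\,p^{t+\varphi(t)+s}n'\pmod t,$$
and since $p^{t+\varphi(t)+s}\equiv p^{t+s}\pmod t$ does hold in general (check it modulo the $p$-part and the prime-to-$p$ part of $t$ separately), what remains is exactly $t\mid\varphi(t)\,p^{t+s}n'$. You assert this will fall out ``after the appropriate bookkeeping,'' but it is false: writing $t=p^a t'$ with $p\nmid t'$, the condition is equivalent to $t'\mid\varphi(t)\,n'$, which depends on $n$ and fails for typical $t$ that are not powers of $p$. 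Concretely, take $p=2$, $t=3$, $n=1$ (so $s=0$, $n'=1$): then $\varphi(3)\cdot 2^{3}=16\equiv 1\pmod 3$, and indeed the lemma itself fails there, since $\mathfrak{S}_8(8)=L_8(8^8)=1$ while $\mathfrak{S}_8(2^{3+\varphi(3)})=\mathfrak{S}_8(32)=L_8(2^{160})=L_8(2\cdot 8^{53})=2$. So the ``delicate part'' you flagged is not a technicality to be worked out later; it is an unfixable obstruction, and no completion of your argument (or any argument) can prove the statement as written.

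For what it is worth, your bookkeeping is more honest than the paper's. The paper asserts that the residue $\beta$ of the second valuation modulo $t$ is the class of $mp^{t+\varphi(t)}n$, silently discarding the contribution $\varphi(t)\,p^{t+\varphi(t)}n$ --- precisely the term you isolated, and precisely the term that need not be divisible by $t$. Both arguments do go through when $t$ is a power of $p$: if $t=p^a$ with $a\geq 1$ then $t+s\geq p^a>a$, so $p^{t+s}\equiv p^{t+\varphi(t)+s}\equiv 0\pmod{p^a}$ and both sides of your congruence vanish modulo $t$. Since the later proposition shows that periodicity of $S(n)=\mathfrak{S}_{p^t}(p^tn)$ forces $t$ to be a power of $p$, the lemma is only ever needed in that salvageable case; but as a proof of the lemma for arbitrary $t>1$, your proposal has a fatal gap at exactly the step you yourself identified as the heart of the matter.
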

\begin{proof}
Put $n=p^mn'$ with $m\geq 0$ and $p\nDiv n'$. Then:
\begin{align*}\mathfrak{S}_{p^t}(p^tn)&=\mathfrak{S}_{p^t}(p^{m+t}n')=L_{p^t}\left(p^{(m+t)p^tn}(n')^{p^tn}\right)=\\&=L_{p^t}\left(p^{mp^tn}(n')^{p^tn}\right)\equiv p^{\alpha}(n')^{p^tn}\ \textrm{(mod $p^t$)},\end{align*}
where $\alpha\in\{0,\dots,t-1\}$ is the class of $mp^tn$ modulo $t$.

On the other hand it can be seen in the same way that:
$$\mathfrak{S}_{p^t}(p^{t+\varphi(t)}n)\equiv p^{\beta}(n')^{p^{t+\varphi(t)}n}\ \textrm{(mod $p^t$)},$$
where $\beta\in\{0,\dots,t-1\}$ is the class of $mp^{t+\varphi(t)}n$ modulo $t$.

Now, to finish the proof it is enough to see that $p^{\alpha}(n')^{p^tn}\equiv p^{\beta}(n')^{p^{t+\varphi(t)}n}$ (mod $p^t$). Obviously $p^{t+\varphi(t)}\equiv p^t$ (mod $t$), thus $\alpha=\beta$ and since $p^{t+\varphi(t)}=p\frac{p^{\varphi(t)}-1}{p-1}\varphi(p^t)+p^t$ it follows (recall that $p\nDiv n'$) that $(n')^{p^{t+\varphi(t)}}\equiv (n')^{p^t}$ (mod $p^t$) and we are done.
\end{proof}

Let us now define the sequence $S(n):=\mathfrak{S}_{p^t}(p^tn)$. The following result summarizes some properties of this sequence.

\begin{lem}\label{propi}
Let $p$ be a prime and let $S(n)$ the sequence defined above. Then, the following properties hold:
\begin{enumerate}
\item[i)] $S(n)=S(p^{\varphi(t)}n)$ for every $n\in\mathbb{N}$.
\item[ii)] If $S(n)$ is (eventually) periodic of period $T$, then $p\nDiv T$.
\item[iii)] If $\mathfrak{S}_{p^t}(n)$ is (eventually) periodic of period $T$, then $S(n)$ is also (eventually) periodic and its period divides $T$.
\end{enumerate}
\end{lem}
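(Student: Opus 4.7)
The plan is to dispatch (i) and (iii) by direct computation from the definitions, and to tackle (ii) by a descent argument in the spirit of Lemma~7, adapted to the fact that (i) shifts by the larger multiplicative step $p^{\varphi(t)}$ rather than by $p$.

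Part (i) is an immediate rewriting of Lemma~10: since $p^{t+\varphi(t)}n = p^t(p^{\varphi(t)}n)$, that lemma gives
$$S(n) = \mathfrak{S}_{p^t}(p^{t+\varphi(t)}n) = \mathfrak{S}_{p^t}\bigl(p^t(p^{\varphi(t)}n)\bigr) = S(p^{\varphi(t)}n).$$
Part (iii) is equally direct: if $\mathfrak{S}_{p^t}$ has eventual period $T$, then for $n$ past the threshold,
$$S(n+T) = \mathfrak{S}_{p^t}(p^t n + p^t T) = \mathfrak{S}_{p^t}(p^t n) = S(n),$$
since $p^t T$ is an integer multiple of $T$. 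Hence $S$ is eventually periodic and its minimal period divides $T$.

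For part (ii), I would mimic the descent of Lemma~7. Suppose for contradiction that $T$ is the minimal eventual period of $S$ and $p\mid T$, and write $T = p^a T'$ with $a\geq 1$ and $\gcd(T',p) = 1$. For any integer $k\geq 1$ one still has $S(n) = S(n+kT)$ for all $n$ sufficiently large. I would choose $k := p^{\max(0,\,\varphi(t)-a)}$, so that $p^{\varphi(t)} \mid kT$. Restricting attention to $n$ of the form $n = p^{\varphi(t)} n_0$, the factorisation
$$n+kT \;=\; p^{\varphi(t)}\bigl(n_0 + kT/p^{\varphi(t)}\bigr),$$
together with (i) applied to both sides of $S(n) = S(n+kT)$, strips the leading $p^{\varphi(t)}$ and yields $S(n_0) = S\bigl(n_0 + kT/p^{\varphi(t)}\bigr)$ for all sufficiently large $n_0$. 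Since $kT/p^{\varphi(t)} = T/p^{\min(a,\varphi(t))} < T$, this contradicts the minimality of $T$.

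The genuine obstacle is the subcase $1\leq a<\varphi(t)$: a single application of (i) cannot pull a factor of $p^{\varphi(t)}$ directly out of $T$. The key idea to get around this is to first pass to the multiple $kT$ with $k = p^{\varphi(t)-a}$, artificially inflating the shift until it is divisible by $p^{\varphi(t)}$, after which (i) in the reverse direction --- $S(p^{\varphi(t)}m) = S(m)$ --- produces the strictly smaller period $T/p^a$.
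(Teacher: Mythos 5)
Your proposal is correct and follows essentially the same route as the paper: parts (i) and (iii) are the same direct computations, and your descent for (ii) --- inflating the shift to $kT$ with $k$ a power of $p$ so that $p^{\varphi(t)}\mid kT$, then stripping $p^{\varphi(t)}$ via (i) to obtain a period $T/p^{\min(a,\varphi(t))}<T$ --- is exactly the paper's chain $S(n)=S(p^{\varphi(t)}n)=S(p^{\varphi(t)}n+p^{\varphi(t)-1}T)=S(n+T')$, just written for general $a=v_p(T)$ instead of pulling out a single factor $T=pT'$. No gaps.
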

\begin{proof}
\begin{enumerate}
\item[i)] This is the previous lemma.
\item[ii)] If $p\mid T$ then $T=pT'$ and we have that $S(n)=S(p^{\varphi(t)}n)=S(p^{\varphi(t)}n+p^{\varphi{t}-1}T)=S(p^{\varphi(t)}n+p^{\varphi(t)}T')=S(n+T')$ with $T'<T$, a contradiction.
\item[iii)] Let $T$ be the period of $\mathfrak{S}_{p^t}(n)$. Then $S(n)=\mathfrak{S}_{p^t}(p^tn)=\mathfrak{S}_{p^t}(p^tn+p^tT)=S(n+T)$ as claimed.
\end{enumerate}
\end{proof}

As a consequence of the previous lemma, to prove that $\mathfrak{S}_{p^t}(n)$ is not eventually periodic it is enough to see that neither is $S(n)$.

\begin{prop}
Let $p$ be a prime and $S(n)=\mathfrak{S}_{p^t}(p^tn)$. If $S(n)$ is eventually periodic, then $t=p^s$.
\end{prop}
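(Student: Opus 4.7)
The plan is to assume $S(n)$ is eventually periodic with minimal period $T$, use Lemma~\ref{propi}(ii) to conclude that $p\nmid T$, and then derive a contradiction by tracking the $p$-adic valuation of $S$ on a carefully chosen family of inputs whenever $t$ has a prime factor different from $p$.

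First I would isolate $v_p(S(n))$. Exactly as in the calculation preceding Lemma~\ref{propi}, for $n=p^{m}n'$ with $p\nmid n'$ one finds
$$S(n)\equiv p^{\alpha}(n')^{p^{t}n}\pmod{p^{t}},\qquad \alpha\equiv mp^{t}n\pmod{t},\quad 0\leq\alpha<t.$$
Because $\gcd(n',p)=1$ and $\alpha<t$, the canonical residue representative in $\{1,\dots,p^{t}-1\}$ has $p$-adic valuation exactly $\alpha$; hence $v_p(S(n))=\alpha$. This identity is the engine of the argument.

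Next I would probe $S$ at $n=p^{m}$ with $m$ large enough to lie in the periodic regime. Here $n'=1$, so $v_p(S(p^{m}))\equiv mp^{t+m}\pmod{t}$. On the other hand, since $p\nmid T$ and $m\geq 1$, we have $p\nmid p^{m}+T$; writing $p^{m}+T=p^{0}\cdot(p^{m}+T)$, the same formula gives $v_p(S(p^{m}+T))=0$. Periodicity then forces
$$mp^{t+m}\equiv 0\pmod{t}\qquad\text{for all sufficiently large }m.$$

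To finish I would write $t=p^{a}u$ with $\gcd(u,p)=1$. For $m$ large enough that $t+m\geq a$, the factor $p^{t+m-a}$ is a unit modulo $u$, so the displayed congruence collapses to $u\mid m$; requiring this for all large $m$ forces $u=1$, and hence $t=p^{a}$, which is exactly the claim $t=p^{s}$.

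The step that will require the most care is establishing $v_p(S(n))=\alpha$ exactly (not merely $v_p(S(n))\geq\alpha$): this rests on both $\alpha<t$ and $\gcd(n',p)=1$ to rule out any extra cancellation modulo $p^{t}$ that might inflate the valuation. Once that observation is secured, the remainder is a direct manipulation of congruences.
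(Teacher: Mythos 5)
Your proof is correct and takes essentially the same route as the paper: both expand $(p^tn)^{p^tn}$ for $n=p^mn'$ to isolate the factor $p^{\alpha}$ with $\alpha\equiv mp^tn\pmod{t}$, and both contrast this with the $p$-free value of $S(n+T)$ using $p\nmid T$ from Lemma~\ref{propi}(ii). Your closing step, writing $t=p^au$ and deducing $u\mid m$ for all large $m$, merely makes explicit the detail the paper compresses into ``we can choose $m$ and $n$ such that $\alpha\neq 0$.''
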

\begin{proof}
We know by hypothesis that $S(n)=S(n+T)$ for every $n\geq n_0$ and put $n=p^mn'$ with $p\nDiv n'$ as usual. Note that since we are dealing with eventual periodicity, we can assume without loss of generality that $m\geq t$. Then:
\begin{align*}
S(n)&=\mathfrak{S}_{p^t}(p^{m+t}n')=L_{p^t}\left(p^{(m+t)p^tn}(n')^{p^tn}\right)=L_{p^t}\left(p^{mnp^t}(n')^{p^tn}\right)\equiv\\ &\equiv p^{\alpha}(n')^{p^tn}\ \textrm{(mod $p^t$)},
\end{align*}
where $\alpha\in\{0,\dots,t-1\}$ is the class of $mnp^t$ modulo $t$.

On the other hand,
\begin{align*}S(n+T)&=\mathfrak{S}_{p^t}(p^tn+p^tT)=L_{p^t}\left((p^tn+p^tT)^{p^tn+p^tT}\right)=\\&=L_{p^t}\left((n+T)^{p^tn+p^tT}\right)\equiv (n+T)^{p^tn+p^tT}\equiv T^{p^t(n+T)}\ \textrm{(mod $p^t$)},\end{align*}
since $p^t\mid n$ because we have chosen $m\geq t$.

Thus, we have seen that for every $n_0\leq n=p^mn'$ with $p\nDiv n'$, if $\alpha$ is the class of $mnp^t$ modulo $t$, then:
$$T^{p^t(n+T)}\equiv p^{\alpha}(n')^{p^tn}\ \textrm{(mod $p^t$)}.$$

Clearly, if $t$ is not a power of $p$, we can choose $m$ and $n$ such that $\alpha\neq 0$ so it follows that $p$ divides $T^{p^t(n+T)}$, a contradiction.
\end{proof}

Due to the previous proposition we only have to worry about the case $\mathfrak{S}_{p^{p^s}}(n)$. We will see that if $p$ is odd, this sequence is not eventually periodic.

\begin{prop}
Let $p$ be an odd prime and $S(n)=\mathfrak{S}_{p^t}(p^tn)$. Then the sequence $S(n)$ is not eventually periodic.
\end{prop}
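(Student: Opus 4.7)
The preceding proposition reduces the problem to the case $t=p^s$, so I would assume $t=p^s$ from the outset and suppose, for contradiction, that $S(n)=S(n+T)$ eventually with $T$ minimal; by Lemma \ref{propi}(ii), $p\nmid T$. The overall strategy is to distill the periodicity relation into an $m$-free congruence in a single free variable $n'$, feed in two carefully chosen values of $n'$ to pin down $T$ modulo $p^t$, and then obtain a contradiction after reduction modulo $p$.

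For the $m$-free congruence I would reuse the formula $S(n)\equiv p^{\alpha}(n')^{p^t n}\pmod{p^t}$ established in the proof of the preceding proposition, where $n=p^m n'$ with $p\nmid n'$. Under the hypothesis $t=p^s$ one has $\alpha=0$, giving $S(n)\equiv (n')^{p^t n}\pmod{p^t}$. Since $p\nmid T$, the integer $n+T$ is coprime to $p$, and taking $m\geq t$ ensures $n+T\equiv T\pmod{p^t}$, so $S(n+T)\equiv T^{p^t(n+T)}\pmod{p^t}$. Exploiting $p\equiv 1\pmod{p-1}$ one verifies that both $p^{t+m}$ and $p^t$ reduce to $p^{t-1}$ modulo $\varphi(p^t)=p^{t-1}(p-1)$, so the equality $S(n)=S(n+T)$ collapses to the $m$-independent congruence
\[(n')^{p^{t-1}n'}\equiv T^{p^{t-1}(n'+T)}\pmod{p^t},\]
valid for every $n'\geq 1$ coprime to $p$.

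Next I would substitute $n'=T$ (valid since $p\nmid T$): the right-hand side becomes $T^{2p^{t-1}T}$, so setting $X=T^{p^{t-1}T}$ we get $X\equiv X^2\pmod{p^t}$, and since $X$ is a unit this forces $X\equiv 1$, i.e.\ $T^{p^{t-1}T}\equiv 1\pmod{p^t}$. Substituting $n'=1$ gives $T^{p^{t-1}(1+T)}\equiv 1\pmod{p^t}$; dividing these two congruences yields $T^{p^{t-1}}\equiv 1\pmod{p^t}$. The displayed congruence then simplifies to $(n')^{p^{t-1}n'}\equiv 1\pmod{p^t}$ for every $n'$ coprime to $p$. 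Reducing modulo $p$ and using $p^{t-1}\equiv 1\pmod{p-1}$ (Fermat), this becomes $(n')^{n'}\equiv 1\pmod p$ for all $n'$ coprime to $p$. Finally, by the Chinese Remainder Theorem I would take $n'$ with $n'\equiv g\pmod p$ for a primitive root $g$ of $p$ and $n'\equiv 1\pmod{p-1}$; then $(n')^{n'}\equiv g\pmod p$, which is not $\equiv 1$ since $g$ has order $p-1\geq 2$ (using that $p$ is odd). This contradicts the previous line and completes the proof.

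The step I expect to be the main obstacle is disentangling $n'$ and $T$ in the coupled congruence. The key idea is the substitution $n'=T$, which exploits the doubling on the right-hand side to produce a self-referential equation for $T$; once $T^{p^{t-1}}\equiv 1\pmod{p^t}$ is extracted, the remainder is a routine mod-$p$ argument via Fermat and CRT. The assumption $t=p^s$ is essential to kill the factor $p^{\alpha}$; otherwise periodicity already fails as in the preceding proposition.
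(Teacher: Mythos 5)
Your proof is correct, and its skeleton matches the paper's: both start from the congruence $T^{p^t(n+T)}\equiv (n')^{p^tn}\pmod{p^t}$ (with $\alpha=0$ forced by $t=p^s$), both specialize $n'$ to pin down $T$, and both end by showing $(n')^{n'}\equiv 1\pmod p$ for all $n'$ coprime to $p$, which is absurd. The difference lies in how $T$ is controlled, and yours is the tighter route. The paper substitutes $n'=1$ and $n'=p-1$, immediately reduces modulo $p$ to get $T^{T+1}\equiv T^T\equiv 1\pmod p$, hence $T\equiv 1\pmod p$, and then asserts ``so $T^{p^s}\equiv 1\pmod{p^{p^s}}$'' --- a lifting step that, as written, does not follow from $T\equiv 1\pmod p$ alone; what is actually true (and what the argument needs) is that $T\equiv 1\pmod p$ forces the order of $T$ in $(\Z/p^{p^s}\Z)^*$ to divide $p^{p^s-1}$, so that $T$ raised to the exponents $p^{p^s}(n+T)$ occurring in the congruence is $1$. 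Your substitution $n'=T$, exploiting the doubling $T^{2p^{t-1}T}$ to get $X\equiv X^2$ for the unit $X=T^{p^{t-1}T}$, followed by $n'=1$, delivers $T^{p^{t-1}}\equiv 1\pmod{p^t}$ directly at the modulus $p^t$ and makes the right-hand side collapse to $1$ with no lifting argument at all. The $m$-free reduction of exponents modulo $\varphi(p^t)=p^{t-1}(p-1)$ that you perform first is valid (both $p^{t+m}$ and $p^t$ are $\equiv p^{t-1}$ there), and your explicit CRT choice of $n'$ at the end is more careful than the paper's ``clearly impossible.'' In short: same strategy, different and somewhat more watertight execution.
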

\begin{proof}
We can reason in a similar way to that in the previous proposition, but in this case $\alpha=0$ necessarily. Hence, using the same notation as in the previous result, we get $T^{p^{p^s}(n+T)}\equiv (n')^{p^{p^s}n}$ (mod $p^{p^s}$). We can choose $n'=1$ so it follows that $T^{p^{p^s}(p^m+T)}\equiv 1$ (mod $p^{p^s}$) and, consequently, that $T^{T+1}\equiv 1$ (mod $p$). If we now choose $n'=p-1$ (recall that $p\neq 2$) we get $T^{p^{p^s}(p^m(p-1)+T)}\equiv 1$ (mod $p^{p^s}$) and, consequently, that $T^{T}\equiv 1$ (mod $p$). Putting these two results together we get that $T\equiv 1$ (mod $p$) so $T^{p^s}\equiv 1$ (mod $p^{p^s}$).

Then, we have that $(n')^{p^{p^s}n}\equiv 1$ (mod $p^{p^s}$) for every $n'$ such that $p\nDiv n'$; i.e., $(n')^{n'}\equiv 1$ (mod $p$) for every $n'$ such that $p\nDiv n'$. Clearly this is impossible in $p\neq 2$ and the proof is complete.
\end{proof}

So, it only remains to study the sequence $\mathfrak{S}_{2^{2^s}}(n)$.

\begin{prop}
Let $b=2^{2^s}$. Then $\mathfrak{S}_b(n)=\mathfrak{S}_b(n+b)$ for every $n\in\mathbb{N}$.
\end{prop}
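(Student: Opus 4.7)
The plan is to write $n = 2^m n'$ and $n+b = 2^{m_1} n_1$ with $n', n_1$ odd, and to factor out the highest possible power of $b=2^{2^s}$ from $n^n$ and from $(n+b)^{n+b}$. This gives
\[
\mathfrak{S}_b(n)\equiv 2^{r}(n')^n\pmod b,\qquad \mathfrak{S}_b(n+b)\equiv 2^{r_1}(n_1)^{n+b}\pmod b,
\]
where $r=mn\bmod 2^s$ and $r_1=m_1(n+b)\bmod 2^s$. The rest of the argument is a case analysis on whether $m<2^s$ or $m\ge 2^s$.

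If $m\ge 2^s$, then $b\mid n$ and $b\mid n+b$, so both $mn$ and $m_1(n+b)$ are multiples of $2^s$, giving $r=r_1=0$. Moreover $\lambda(b)=2^{2^s-2}$ divides both $n$ and $n+b$ (since $m,m_1\ge 2^s\ge 2^s-2$), so Carmichael's theorem yields $(n')^n\equiv(n_1)^{n+b}\equiv 1\pmod b$ and both sides of the claim equal $1$. The small cases $s=0,1$, where $\lambda(b)$ is degenerate, are checked directly.

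The main work lies in the case $0\le m<2^s$. Here $v_2(n+b)=m$, hence $m_1=m$ and $n_1=n'+2^{2^s-m}$ (odd). Since $mb=m\cdot 2^{2^s}\equiv 0\pmod{2^s}$ the prefactors match ($r_1=r$), and one is reduced to proving $(n')^n\equiv(n_1)^{n+b}\pmod{2^{2^s-r}}$. Writing $(n_1)^{n+b}=(n_1)^n(n_1)^b$, the factor $(n_1)^b$ is congruent to $1$ modulo $2^{2^s-r}$ because $\lambda(2^{2^s-r})$ is a power of $2$ dividing $b$. It then suffices to show $(n_1)^n\equiv(n')^n\pmod{2^{2^s}}$. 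Setting $w\equiv n_1(n')^{-1}\pmod{2^K}$ for $K$ large, one has $v_2(w-1)=2^s-m$, and an elementary lemma (proved by induction on $v_2(e)$ via successive squaring) states that if $w\equiv 1\pmod{2^j}$ with $j\ge 2$, then $w^e\equiv 1\pmod{2^{j+v_2(e)}}$. Taking $j=2^s-m$ and $e=n$ (so $v_2(e)=m$) yields $w^n\equiv 1\pmod{2^{2^s}}$, which gives the required congruence.

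The main obstacle is the boundary value $m=2^s-1$, where $v_2(w-1)=1$ and the lemma above does not apply directly. I would treat this case by an initial squaring step: for any odd $w$ with $v_2(w-1)=1$ one computes $v_2(w^2-1)\ge 3$, so the lemma applied to $w^2$ (with $j=3$ and exponent $n/2$ of $2$-adic valuation $2^s-2$) gives $w^n\equiv 1\pmod{2^{2^s+1}}$, more than sufficient. Analogous care is needed in the degenerate moduli $2^{2^s-r}\in\{1,2,4\}$ that may arise in the substep handling $(n_1)^b$.
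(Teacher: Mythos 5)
Your proposal is correct, and it takes a genuinely different route from the paper's. The paper makes the same top-level split ($b\mid n$ versus $b\nmid n$, i.e.\ $m\ge 2^s$ versus $m<2^s$), but in the main case its central trick is the factorization $n^{n+b}=2^{mb}(n')^{b}\cdot n^n$, where $2^{mb}$ is a power of $b$ and $(n')^{b}\equiv 1\pmod b$ because $\varphi(2^{2^s})\mid 2^{2^s}$; everything is thus reduced to comparing $L_b(n^{n+b})$ with $L_b(n^n)$, i.e.\ two powers of the \emph{same} base $n$. You instead derive the explicit formula $L_b(n^n)\equiv 2^{mn\bmod 2^s}(n')^n\pmod b$, match the prefactors $2^r$ and $2^{r_1}$, and then compare the odd parts $(n')^n$ and $(n_1)^{n+b}$ for the two \emph{different} bases $n'$ and $n_1=n'+2^{2^s-m}$ via the $2$-adic lifting-the-exponent lemma applied to $w=n_1(n')^{-1}$, with the boundary case $v_2(w-1)=1$ handled by a preliminary squaring. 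That lemma is the genuinely new ingredient, and it buys you something the paper's write-up needs but does not supply: the paper passes from $L_b\bigl((n+b)^{n+b}\bigr)$ to $L_b\bigl(n^{n+b}\bigr)$ on the strength of a congruence modulo $b$ alone, which is not sufficient once $b$ divides these numbers; justifying that replacement amounts to exactly the congruence $(n_1)^{n+b}\equiv (n')^{n+b}\pmod{2^{2^s-r}}$ that your computation (with $v_2(w-1)=2^s-m$ and $v_2(n+b)=m$) establishes. The price is a longer and more computational argument, and a few degenerate moduli to check by hand, where the paper's same-base factorization is slicker; but your version is the more airtight of the two.
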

\begin{proof}
First of all we consider the case $b\nDiv n$. In this case, since $b\nDiv n+b$ it follows that $\mathfrak{S}_b(n)\equiv n^n$ (mod $b$) and $\mathfrak{S}(n+b)\equiv (n+b)^{n+b}\equiv n^{n+b}$ (mod $b$). We now consider two possibilities:
\begin{itemize}
\item[i)] If $n$ is odd $n^b\equiv 1$ (mod $b$) because $\varphi(b)=\varphi(2^{2^s}) \mid 2^{2^s}=b$. Thus $n^{n+b}\equiv n^n$ (mod $b$) and we are done.
\item[ii)] If $n$ is even we put $n=2^mn'$ with $n'$ odd and $m<2^s$. Thus, $n^{n+b}=2^{m2^{2^s}}(n')^{2^{2^s}}n^n$. Since $s<2^s$ it follows that $2^{m2^{2^s}}$ is a power of $b$ and consequently $L_b\left(2^{m2^{2^s}}\right)=1$. Moreover, since $(n')^{b}\equiv 1$ (mod $b$) it also follows that $L_b\left((n')^{b}\right)=1$ so:
\begin{align*}\mathfrak{S}_b(n+b)&=L_b(n^{n+b})\equiv L_b\left(2^{m2^{2^s}}\right)L_b\left((n')^{b}\right)L_b(n^n)=L_b(n^n)=\\&=\mathfrak{S}_b(n)\ \textrm{(mod $b$)}\end{align*}
an the proof is complete in this case.
\end{itemize}

Now, we have to consider the case when $b \mid n$; i.e., when $n=b^an'=2^{a2^s}n'$ with $b\nDiv n'$. In this case $\mathfrak{S}_b(n)=L_b(n^n)=L_b\left((n')^{2^{a2^s}n'}\right)$. Now, if $n'$ is odd we have that $L_b\left((n')^{2^{a2^s}n'}\right)(n')^{2^{a2^s}n'}\equiv 1$ (mod $b$). On the other hand, if $n'$ is even; i.e., $n'=2^mn''$ with $n''$ odd and $m<2^s$ we have that:
$L_b\left((n')^{2^{a2^s}n'}\right)=L_b\left(2^{amn'2^{2^s}}(n'')^{2^{an'2^s}}\right)=L_b\left((n'')^{2^{an'2^s}}\right)\equiv 1$ (mod $b$). Thus, we have seen that if $b \mid n$, then $\mathfrak{S}_b(n)=1$. We will have to compute now $\mathfrak{S}_b(n+b)$ in this case.

To do so, observe that
\begin{align*}\mathfrak{S}_b(n+b)&=L_b\left((n+b)^{n+b}\right)=L_b\left(\left(2^{2^s}(2^{2^s(a-1)}n'+1)\right)^{n+2^{2^s}}\right)=\\&=L_b\left(\left(2^{2^s(a-1)}n'+1\right)^{n+2^{2^s}}\right),\end{align*}
and two cases arise:
\begin{itemize}
\item[i)]
If $a>1$ then $b\nDiv 2^{2^s(a-1)}n'+1$ and thus $L_b\left(\left(2^{2^s(a-1)}n'+1\right)^{n+2^{2^s}}\right)\equiv \left(2^{2^s(a-1)}n'+1\right)^{n+2^{2^s}}\equiv 1$ (mod $b$).
\item[ii)]
If $a=1$ we must compute $L_b\left((n'+1)^{n+2^{2^s}}\right)$ and we have two sub cases:
\begin{itemize}
\item[ii1)]
If $n'+1$ is odd, then $(n'+1)^{n+2^{2^s}}\equiv 1$ (mod $b$).
\item[ii2)]
If $n'+1$ is even, $n'+1=2^mn''$ with $n''$ odd an clearly
$$L_b\left((n'+1)^{n+2^{2^s}}\right)=L_b\left((n'')^{2^{2^s}(n'+1)}\right)=1.$$
\end{itemize}
\end{itemize}

Thus, we have seen that if $b \mid n$, then $\mathfrak{S}_b(n+b)=1=\mathfrak{S}_b(n)$ and the proof is completely finished.
\end{proof}

After all the work done, we have proved the following result.

\begin{thm}
The sequence $\mathfrak{S}_{p^t}(n)$ is eventually periodic if and only if $p=2$ and $t=2^s$ for some $s\in\mathbb{N}$ and, in that case, it is periodic.
\end{thm}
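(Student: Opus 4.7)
The plan is simply to assemble the propositions already proved in this section into one statement; no new technical computation is required, only a case analysis on $p$ and $t$ together with the reduction from $\mathfrak{S}_{p^t}$ to the auxiliary sequence $S(n):=\mathfrak{S}_{p^t}(p^t n)$ furnished by Lemma \ref{propi}.

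For the sufficiency direction (which also delivers the ``in that case, it is periodic'' clause), suppose $p=2$ and $t=2^s$. When $s\geq 1$, the preceding proposition directly asserts $\mathfrak{S}_b(n+b)=\mathfrak{S}_b(n)$ for every $n\in\mathbb{N}$, so the sequence is periodic from the start (not merely eventually) with period dividing $b=2^{2^s}$. The edge case $t=1$ (that is, $s=0$) reduces to $\mathfrak{S}_2$, which is identically $1$, hence trivially periodic.

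For the necessity direction, I would assume $\mathfrak{S}_{p^t}(n)$ is eventually periodic and rule out, case by case, the two ways the conclusion could fail: (a) $p$ odd, or (b) $p=2$ with $t$ not a power of $2$. In case (a) with $t=1$, the earlier non-periodicity result for $\mathfrak{S}_p$ at odd primes gives an immediate contradiction. In case (a) with $t>1$, Lemma \ref{propi}(iii) promotes the eventual periodicity of $\mathfrak{S}_{p^t}$ to eventual periodicity of $S(n)$; the first proposition about $S$ then forces $t=p^s$, after which the subsequent proposition (the one treating $S$ for odd $p$ and $t=p^s$) contradicts the very periodicity of $S$ just obtained. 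In case (b), Lemma \ref{propi}(iii) again passes eventual periodicity to $S(n)$, and the proposition on $S$ forces $t=p^s=2^s$, contradicting the assumption that $t$ is not a power of $2$.

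There is no genuine obstacle remaining: every substantive computation has already been packaged into the propositions on $\mathfrak{S}_p$ (odd $p$), on $S(n)$ constraining $t$ to be a power of $p$, and on $\mathfrak{S}_{2^{2^s}}$. The only things to watch are the case split, the separate treatment of $t=1$, and using the fact that the final proposition yields genuine rather than merely eventual periodicity, so that the periodicity clause in the theorem comes for free.
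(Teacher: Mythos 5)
Your proposal is correct and is essentially the paper's own argument: the paper offers no separate proof of the theorem beyond the remark ``after all the work done,'' so the intended proof is precisely the assembly you describe (odd $p$, $t=1$ via the non-periodicity proposition for $\mathfrak{S}_p$; $t>1$ via Lemma \ref{propi}(iii) passing eventual periodicity to $S(n)$, the proposition forcing $t=p^s$, and the odd-$p$ non-periodicity of $S(n)$; sufficiency and genuine periodicity via the proposition on $b=2^{2^s}$, with $s=0$ reducing to the constant sequence $\mathfrak{S}_2\equiv 1$). Your ordering of invoking the $t=p^s$ constraint before the odd-prime contradiction for $S(n)$ is the right chain, since that latter proposition is only established for $t=p^s$.
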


\section{An ending conjecture}
The techniques that we have used in this paper have not been useful in order to attack the general case. Nevertheless, based on computational evidence, the authors have the conviction that the case $b=2^{2^s}$ provides us with the only example in which the considered sequence is eventually periodic (and, in fact, periodic); i.e., we present the conjecture below.

\begin{conj}
The sequence $\mathfrak{S}_b(n)$ is eventually periodic if and only if $b=2^{2^s}$ for some $s\in \mathbb{N}$ and, moreover, in that case the sequence is periodic.
\end{conj}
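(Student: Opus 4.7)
The plan is to assemble the theorem as a synthesis of the results proved throughout this subsection, organized by cases on the pair $(p,t)$. For the \emph{if} direction, assume $p=2$ and $t=2^s$. When $s=0$, so $b=2$, the sequence $\mathfrak{S}_2(n)$ is identically $1$ (as noted at the end of Subsection 3.2.1) and hence trivially periodic. When $s\geq 1$, the last proposition above furnishes the identity $\mathfrak{S}_b(n+b)=\mathfrak{S}_b(n)$ for every $n\in\mathbb{N}$, which is periodicity (not merely eventual periodicity) with period dividing $b$.

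For the \emph{only if} direction, I argue by contradiction. Suppose $\mathfrak{S}_{p^t}(n)$ is eventually periodic. If $p$ is odd and $t=1$, the non-periodicity statement for $\mathfrak{S}_p$ proved earlier gives an immediate contradiction. If $p$ is odd and $t>1$, I apply part (iii) of Lemma~\ref{propi} to deduce that the auxiliary sequence $S(n):=\mathfrak{S}_{p^t}(p^tn)$ is also eventually periodic. The proposition characterizing when $S(n)$ can be eventually periodic then forces $t=p^s$ for some $s\geq 1$, and the subsequent proposition (which rules out eventual periodicity of $S(n)$ for odd $p$, precisely in this case $t=p^s$) supplies the required contradiction.

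The remaining case is $p=2$ with $t$ not a power of $2$. Here the same cascade applies: Lemma~\ref{propi}(iii) combined with the proposition forcing $t=p^s$ whenever $S(n)$ is eventually periodic contradicts the assumption on $t$. I do not anticipate any genuine obstacle in writing up this proof beyond careful bookkeeping; the only subtle point is to order the invocations correctly, so that the stronger non-periodicity statement for odd $p$ is invoked only after the weaker constraint $t=p^s$ has been established, thereby avoiding any circularity between the two propositions about $S(n)$.
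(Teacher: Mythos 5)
There is a fundamental mismatch between what you have proved and what the statement asserts. The statement here is the paper's closing \emph{Conjecture}, which concerns the sequence $\mathfrak{S}_b(n)$ for an \emph{arbitrary} base $b\in\mathbb{N}$. The paper explicitly does not prove it: the authors write that their techniques ``have not been useful in order to attack the general case'' and offer the statement only as a conjecture supported by computational evidence. Your argument, by contrast, silently restricts to $b=p^t$ a prime power and then assembles the subsection's results by cases on $(p,t)$. That assembly is essentially correct and matches how the paper derives its \emph{Theorem} (the prime-power case), but it is a proof of a strictly weaker statement.

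Concretely, the ``only if'' direction of the Conjecture requires showing that $\mathfrak{S}_b(n)$ is not eventually periodic for every $b$ that is not of the form $2^{2^s}$, including bases with two or more distinct prime factors. The paper handles only two families: even square-free $b$ (Section 3.1) and prime powers $b=p^t$ (Section 3.2). A base such as $b=12=2^2\cdot 3$, $b=45=3^2\cdot 5$, or $b=100$ falls under neither, and nothing in your proposal (nor in the paper) addresses it; Lemma~\ref{propi} and the two propositions about $S(n)=\mathfrak{S}_{p^t}(p^tn)$ are all stated and proved only for prime-power moduli, so the ``cascade'' you describe has no analogue for such $b$. The ``if'' direction is fine, since $2^{2^s}$ is a prime power and the final proposition gives genuine periodicity with period $b$. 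But as a proof of the Conjecture as stated, the argument has a gap that is not mere bookkeeping: it is the entire open content of the problem.
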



\begin{thebibliography}{9999}


\bibitem{Cro1} R. Crocker. On a new problem in number theory. \emph{Amer. Math. Monthly} 73:355--357, 1966.
\bibitem{Cro2} R. Crocker. On residues of $n^n$. \emph{Amer. Math. Monthly} 76:1028--1029, 1969.
\bibitem{Daniel} D. B. Shapiro and S.D. Shapiro. Iterated exponents in number theory. \emph{Integers} 7:A23, 2007.
\bibitem{Dresden2} G. Dresden. Two transcendental numbers from the last non-zero digits of $n^n$ and $n!$. \emph{Math. Mag.} 74:316--320, 2001.
\bibitem{Dresden} G. Dresden. Three irrational numbers from the last non-zero digits of $n^n$, $F_n$, and $n!$. \emph{Math. Mag.} 81(2):96--105, 2008.
\bibitem{Euler} R. Euler and J. Sadek. A number that gives the unit digit of $n^n$. \emph{J. Rec. Math.} 29(3):203--204, 1998.
\bibitem{Hampel} R. Hampel. The length of the shortest period of rests of number $n^n$. \emph{Ann. Polon. Math.} 1:360--366, 1955.
\bibitem{sier} W. Sierpinski. Sur la p\'{e}riodicit\'{e} mod $m$ de certaines suites infinies d'entiers. \emph{Ann. Soc. Polon. Math.} 23:252--258, 1950.
\bibitem{somer} L. Somer. The residues of $n^n$ modulo $p$. \emph{Fibonacci Quart.} 19(2):110--117, 1981.
\bibitem{Lagrange} J. L. Lagrange. \emph{Oeuvres de Lagrange}. Gautiers Villars, Paris 7, 1877.
\bibitem{Wall} D.D. Wall. Fibonacci series modulo $m$. \emph{Amer. Math. Monthly} 67:525--532, 1960.
\bibitem{Morris} J. D. Fulton and W. L. Morris. On arithmetical functions related to the Fibonacci numbers. \emph{Acta Arithmetica} 16:105--110, 1969.
\bibitem{Rob} D.W. Robinson. The Fibonacci matrix modulo $m$. \emph{Fibonacci Quart.} 1:29--36, 1963.
\bibitem{Walter} W. Shur. The Last Digit of $\binom{2n}{n}$ and $\sum \binom{n}{i}\binom{2n-2i}{n-i}$. \emph{Electron J. Combin.} 4(2):R16, 1997.

\end{thebibliography}
\end{document}